\titleformat*{\section}{\large\bfseries}
\titleformat*{\subsection}{\normalsize\bfseries}
\newtheorem{thm}{Theorem}
\newtheorem*{thm*}{Theorem}
\newtheorem{proposition}{Proposition}
\title{\bf Limit theorems for the Multiplicative Binomial Distribution (MBD)}
\author{F. Fortunato \\ Department of Statistical Sciences, University of Bologna, Italy }
\date{\today}
\begin{document}

\maketitle

\renewenvironment{abstract}
{\begin{quote}
\noindent \rule{\linewidth}{.5pt}\par{\bfseries \abstractname.}}
{\medskip \rule{\linewidth}{.5pt}
\end{quote}
}

\begin{abstract}
The sum of $n$ {non-independent} Bernoulli random variables could be modeled in several different ways. One of these is the Multiplicative Binomial Distribution (MBD), introduced by Altham (1978) and revised by Lovison (1998). In this work, we focus on the distribution asymptotic behavior as its parameters diverge. In addition, we derive a specific property describing the relationship between the joint probability of success of $n$ binary-dependent responses and the individual Bernoulli one; particularly, we prove that it depends on both the sign and the strength of the association between the random variables.\\
\end{abstract}
{\em Keywords}: Multiplicative Binomial Distribution, dependent Bernoulli variables, asymptotic analysis.

\section{Introduction}
Let $Z_i$ be a binary response measuring, for an event of interest, its {\em presence} (`1', success) or {\em absence} (`0', failure) and let $Y_n = \sum_{i=1}^{n} Z_i$ be the number of successes in a sequence of $n$ trials.  \\
In the case of independent trials, it is well known that $Y_n$ follows a Binomial distribution, $Y_n \sim Bin (\pi)$, where $\pi = P(Z_i =1)$ is the fixed trial probability of success. \\
In the case of non-independent trials, the Binomial extension is not unique. In recent years, several approaches have been discussed in order to accomodate the association among the $Z_i$s, as dependent or correlated binary data are becoming more and more common in many application areas \citep{zhao1990correlated} (e.g. studies of disease occurrence among family members, analyses with repeated measurements on study subjects, longitudinal series, researches involving group randomization, ensemble classification, \dots). The literature about the sums of non-independent Bernoulli random variables shows different possible strategies for dealing with the ``intra-units" association: Skellam \citep{skellam1948probability} proposed to model the $\pi$ parameter of the Binomial Distribution with a Beta$(\alpha,\beta)$ model; Altham \citep{altham1978two} discussed the possibility of extending the Binomial model in two different directions, the Additive Binomial Distribution and the Multiplicative Binomial Distribution, respectively characterized by an `additive' and a `multiplicative' definition of the interaction among units; Diniz {\em et al.} \citep{diniz2010bayesian} applied a Bayesian approach to the Correlated Binomial model introduced by Luce{\~n}o \citep{luceno1995family}; Kadane \citep{kadane2016sums} derived the Conway-Maxwell-Binomial Distribution so as to model both positive and negative dependence among the Bernoulli summands. \\

In this work, with the aim of modeling the non-independence, we focused our attention on the {\em Multiplicative Binomial Distribution} (MBD) introduced in \citep{altham1978two} and based on the original Cox's log-linear representation (1972), by studying its asymptotic behavior. Specifically, we refer to the revised version of that distribution, named {\em Lovison's Multiplicative Binomial Distribution} (LMBD), introduced by Lovison \citep{lovison1998alternative} and characterized by a more intuitive interpretation of the distribution parameters. Such a distribution is a member of the exponential family and therefore it has sufficient statistics and a family of proper conjugate distributions. \\
Under the assumption of exchangeable units, the (L)MBD takes the form:

\begin{equation*}
P(Y_n=y) = \frac{\binom{n}{y} \psi^y (1-\psi)^{n-y} \omega^{(n-y)y}}{\sum_{i=0}^{n} \binom{n}{i} \psi^i (1-\psi)^{n-i} \omega^{(n-i)i}}.
\label{eq:LMBD}
\end{equation*}

Here:
\begin{itemize}
\item $\psi$, $0 \le \psi =\pi/\tau_1 \le 1$ is the {\bf independence marginal probability} parameter (i.e. in the case of {\em independent} trials $\psi=\pi$), where
\[
\tau_r(\psi,\omega)=\frac{K_{n-r}(\psi,\omega)}{K_{n}(\psi,\omega)} \qquad r = 1, \dots, n
\]
and 
\[
K_{n-a}(\psi,\omega) = \sum_{i=0}^{n-a} \binom{n-a}{i} \psi^i (1-\psi)^{n-a-i} \omega^{(n-a-i)(i+a)};
\]

$\psi$ could be less than or larger than $\pi$, depending on $\tau_1(\psi, \omega)$.
\item $\omega > 0$ is the {\bf intra-units association} parameter which governs the dependence between the trials: $\omega<1$ describes positively associated variables, $\omega>1$ a negative global relationship and $\omega = 1$ independent trials. \\
This measure is inversely related to the conditional cross-product ratio (CPR) as
\begin{equation*}
\label{omega}
\begin{aligned}
&\omega_{i,j} = \frac{1}{\sqrt{CPR_{i,j}|rest}}\mbox{,} \\
&CPR_{i,j}|rest=\frac{P(Z_i=1, Z_j=1)P(Z_i=0,Z_j=0)}{P(Z_i=1, Z_j=0)P(Z_i=0, Z_j=1)}, \qquad i,j = 1, \cdots, n, \quad i \ne j.
\end{aligned}
\end{equation*}
\end{itemize}

In \citep{lovison1998alternative}, Lovison also derived the first two central moments of the (L)MBD in a form that facilitates their comparison to the binomial ones:
\begin{equation*}
\begin{aligned}
E[Y_n]&=n\psi\tau_1 \\
V[Y_n]&=n\psi\eta 
\end{aligned}
\end{equation*}
where $\eta= \tau_1-\psi(n\tau_1^2-(n-1)\tau_2)$.
\bigskip

In section 2, the limits of the (L)MBD are investigated. In particular, Theorem~\ref{theorem1} proves the convergence of the (L)MBD to the Dirac-Delta, $\delta$, when both its parameters $\omega$ and $\psi$ diverge; then, Proposition~\ref{prop1} shows the asymptotical behavior of the distribution, as $n$ increases; lastly, Theorem~\ref{theorem2} describes the relationship between the parameters of the (L)MBD $\omega$ and $\psi$ and the probability of success of a single trial, $\pi$. Each statement is followed by its mathematical proof.

\bigskip
\section{Limit theorems of (L)MBD}
\begin{thm}
\label{theorem1}
Let $Y_n \sim (L)MBD(\psi, \omega)$, $n$ be the number of trials and $k$ a positive integer: \\
\begin{itemize}
\item $\forall n$:
\begin{equation*}
Y_n \xrightarrow[\omega \rightarrow 0^+]{d} \begin{cases} \delta\textnormal{(0)} & \text{if} \hspace{0.2cm} \psi \rightarrow 0 \\
		\delta\textnormal{(n)} & \text{if} \hspace{0.2cm} \psi \rightarrow 1
		\end{cases}
\end{equation*}

\item $\forall n=2k$:
\begin{equation*}
Y_n \xrightarrow[\omega \rightarrow +\infty]{d} \delta\textnormal{$\left(\frac{\rm{n}}{2}\right)$}
\end{equation*}

\item $\forall n=2k+1$:
\begin{equation*}
Y_n \xrightarrow[\omega \rightarrow +\infty]{d} \begin{cases}\delta\textnormal{$\left(\frac{\rm{n-1}}{2}\right)$} & \text{if} \hspace{0.2cm} \psi \rightarrow 0 \\
		\delta\textnormal{$\left(\frac{\rm{n-1}}{2}+1\right)$} & \text{if} \hspace{0.2cm} \psi \rightarrow 1     \\
		\end{cases}
\end{equation*}

\end{itemize}
\end{thm}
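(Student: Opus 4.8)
The plan is to reduce the whole statement to bookkeeping of which term dominates the normalising sum in each regime. Write the unnormalised weights
\[
w(y) \;=\; \binom{n}{y}\,\psi^{y}(1-\psi)^{n-y}\,\omega^{\,y(n-y)}, \qquad y = 0,1,\dots,n,
\]
so that $P(Y_n=y) = w(y)\big/\sum_{i=0}^{n}w(i)$. Since $Y_n$ is supported on the finite set $\{0,1,\dots,n\}$, convergence in distribution to $\delta(c)$ is equivalent to $P(Y_n=c)\to 1$, and more generally it is enough to identify which weights remain of order $1$ after the appropriate rescaling. The quantity that governs this is the exponent of $\omega$, namely $q(y):=y(n-y)$: a concave quadratic, symmetric about $n/2$, with $q(0)=q(n)=0$ its minimum over $\{0,\dots,n\}$ and its maximum attained at the integer(s) nearest $n/2$, and one has the identity $q(y)=(n/2)^{2}-(y-n/2)^{2}$. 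So the first step is simply to record these elementary facts about $q$.

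For $\omega\to0^{+}$ I would factor out $\omega^{0}=1$. The only weights not carrying a strictly positive power of $\omega$ are $w(0)=(1-\psi)^{n}$ and $w(n)=\psi^{n}$, so every other probability is $o(1)$ and, for fixed $\psi\in(0,1)$,
\[
P(Y_n=0)\longrightarrow \frac{(1-\psi)^{n}}{\psi^{n}+(1-\psi)^{n}}, \qquad P(Y_n=n)\longrightarrow \frac{\psi^{n}}{\psi^{n}+(1-\psi)^{n}}.
\]
Then letting $\psi\to0$ kills $\psi^{n}$ and leaves $\delta(0)$, while $\psi\to1$ kills $(1-\psi)^{n}$ and leaves $\delta(n)$; this is the first bullet.

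For $\omega\to+\infty$ I would instead divide numerator and denominator by $\omega^{M}$ with $M:=\max_{0\le y\le n}q(y)$, so that only the weights attaining $M$ survive. If $n=2k$ then $q(k)=k^{2}$ while $q(y)=k^{2}-(y-k)^{2}\le k^{2}-1$ for every other integer $y$, so $y=n/2$ is the unique survivor; hence $P(Y_n=n/2)\to1$ for each fixed $\psi$, which is the second bullet. If $n=2k+1$ then $M=k^{2}+k$ is attained at exactly the two points $y=k=(n-1)/2$ and $y=k+1=(n+1)/2$ (any other integer gives $q(y)\le k^{2}+k-2$); here I would use $\binom{2k+1}{k}=\binom{2k+1}{k+1}$ to obtain $w(k+1)/w(k)=\psi/(1-\psi)$, hence $P(Y_n=k)\to1-\psi$ and $P(Y_n=k+1)\to\psi$, all other probabilities vanishing. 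Sending $\psi$ to $0$ or to $1$ then gives $\delta\bigl((n-1)/2\bigr)$ or $\delta\bigl((n-1)/2+1\bigr)$, which is the third bullet.

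The main obstacle, such as it is, is purely the extremal combinatorics of $q(y)=y(n-y)$ on the integer lattice: one must be precise that the maximiser is a single point when $n$ is even but a two-element set when $n$ is odd (a one-line completion-of-the-square argument), and the identity $\binom{2k+1}{k}=\binom{2k+1}{k+1}$ is exactly what makes the odd case collapse to a clean $(1-\psi,\psi)$ two-point law rather than something messier. The only other thing to handle carefully is the order of limits: each displayed limit is to be read as first sending $\omega$ to its extreme with $\psi$ fixed in $(0,1)$ and only afterwards sending $\psi$ to the boundary. Under that convention every step is an elementary bound on a finite sum — convergence of the normalised ratios is automatic because the index set is finite — so I anticipate no genuine analytic difficulty, only the need to present the three cases cleanly.
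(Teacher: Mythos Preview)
Your argument is correct, but it takes a genuinely different route from the paper's own proof. The paper works through the first two moments: it computes $\lim_{\omega}\tau_j$ in each regime, substitutes into Lovison's closed forms $E[Y_n]=n\psi\tau_1$ and $V[Y_n]=n\psi\eta$, simplifies, and then argues that whenever the limiting variance is $0$ the distribution must collapse to a Dirac mass at the limiting mean. Your approach bypasses the moments entirely and goes straight to the probability mass function, isolating the dominant weights via the extremal analysis of $q(y)=y(n-y)$ on $\{0,\dots,n\}$. The two methods agree at every intermediate stage --- for instance your two-point $(1-\psi,\psi)$ law on $\{k,k+1\}$ in the odd case has exactly the mean $(n-1)/2+\psi$ and variance $\psi(1-\psi)$ that the paper obtains after a page of algebra --- but your route is shorter and more transparent, since it never requires the $\tau_j$ machinery or the moment formulas and makes the role of the parity of $n$ immediately visible. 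The paper's approach, on the other hand, has the minor advantage that the intermediate moment limits are themselves of independent interest (they describe the nondegenerate two-point laws one sees before sending $\psi$ to the boundary), and it connects more directly to the parametrisation of the model. Your remark about the order of limits is also a useful clarification that the paper leaves implicit.
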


\begin{proof}
{\bf Case 1} - Positive association ($\omega<1$), $\forall n$: \\
\begin{equation*}
\begin{aligned}
\lim_{\omega \to 0^+} \tau_j
&= \lim_{\omega \to 0^+} \frac{\sum_{i=0}^{n-j} \binom{n-j}{i} \psi^i (1-\psi)^{n-j-i} \omega^{(n-j-i)(i+j)}}{\sum_{i=0}^{n} \binom{n}{i} \psi^i (1-\psi)^{n-i} \omega^{(n-i)i}} \\
&=\frac{\lim_{\omega \to 0^+}\sum_{i=0}^{n-j} \binom{n-j}{i} \psi^i (1-\psi)^{n-j-i} \omega^{(n-j-i)(i+j)}}{\lim_{\omega \to 0^+}\sum_{i=0}^{n} \binom{n}{i} \psi^i (1-\psi)^{n-i} \omega^{(n-i)i}} \\
&= \frac{\psi^{n-j}}{\psi^n+(1-\psi)^n} , \qquad j \le {n}, \qquad \tau_j = \mathcal O(\omega^{{n}-1}).\\
\end{aligned}
\end{equation*}

Therefore,

\begin{equation*}
\begin{aligned}
\lim_{\omega \to 0^+} E[Y_n] &= \lim_{\omega \to 0^+} n\psi\tau_1 = n\psi\lim_{\omega \to 0^+} \tau_1 = \frac{n\psi^{n}}{\psi^n+(1-\psi)^n} \\
\lim_{\omega \to 0^+} V[Y_n] &= \lim_{\omega \to 0^+} n\psi\eta = \lim_{\omega \to 0^+} n\psi [\tau_1-\psi(n\tau_1^2-(n-1)\tau_2)] = n\psi [\lim_{\omega \to 0^+} \tau_1-\psi(n\lim_{\omega \to 0^+}\tau_1^2-(n-1)\lim_{\omega \to 0^+}\tau_2)]  \\
& = n\psi \left[\frac{\psi^{n-1}}{\psi^n+(1-\psi)^n} - \psi n \left(\frac{\psi^{n-1}}{\psi^n+(1-\psi)^n}\right)^2 + \psi (n-1) \frac{\psi^{n-2}}{\psi^n+(1-\psi)^n}\right]\\
&=n\psi \left[\frac{\psi^{n-1}}{\psi^n+(1-\psi)^n} - \psi n \left(\frac{\psi^{n-1}}{\psi^n+(1-\psi)^n}\right)^2 + \psi n \frac{\psi^{n-2}}{\psi^n+(1-\psi)^n} - \psi \frac{\psi^{n-2}}{\psi^n+(1-\psi)^n}\right] \\
&=n\psi \left[\frac{\psi^{n-1}}{\psi^n+(1-\psi)^n} - \psi n \left(\frac{\psi^{n-1}}{\psi^n+(1-\psi)^n}\right)^2 + \psi n \frac{\psi^{n-2}}{\psi^n+(1-\psi)^n} - \frac{\psi^{n-1}}{\psi^n+(1-\psi)^n}\right] \\
&= n\psi \left[\frac{n \psi^{n-1}}{\psi^n+(1-\psi)^n} - \psi n \frac{\psi^{2n-2}}{[\psi^n+(1-\psi)^n]^2}\right] \\
&= n\psi \left[\frac{n \psi^{n-1}}{\psi^n+(1-\psi)^n} - \frac{n \psi^{2n-1}}{[\psi^n+(1-\psi)^n]^2} \right]\\
&= \frac{n^2 \psi^{n}}{\psi^n+(1-\psi)^n} - \frac{n^2 \psi^{2n}}{[\psi^n+(1-\psi)^n]^2}.
\end{aligned}
\end{equation*}

It follows that:
{\footnotesize
\begin{equation*}
\begin{aligned}
&\lim_{\psi \to 0^+} \left(\lim_ {\omega \to 0^+} E[Y_n]\right) = \lim_{\psi \to 0^+}  \left[\frac{{n}\psi^{{n}}}{\psi^{n}+(1-\psi)^{n}} \right]=0, \quad \mbox{where} \quad \frac{{n}\psi^{{n}}}{\psi^{n}+(1-\psi)^{n}} = \mathcal O(\psi^{{n}-1})\\
&\lim_{\psi \to 0^+} \left(\lim_ {\omega \to 0^+} V[Y_n]\right) = \lim_{\psi \to 0^+} \left[\frac{{n}^2 \psi^{{n}}}{\psi^{n}+(1-\psi)^{n}} - \frac{{n}^2 \psi^{2{n}}}{[\psi^{n}+(1-\psi)^{n}]^2}\right]=0, \quad \mbox{where} \quad \frac{{n}^2 \psi^{{n}}}{\psi^{n}+(1-\psi)^{n}} - \frac{{n}^2 \psi^{2{n}}}{[\psi^{n}+(1-\psi)^{n}]^2} = \mathcal O(\psi^{{n}-1}) \\
&\lim_{\psi \to 1^-} \left(\lim_ {\omega \to 0^+} E[Y_n]\right) = \lim_{\psi \to 1^-}  \left[\frac{{n}\psi^{{n}}}{\psi^{n}+(1-\psi)^{n}} \right]={n},  \quad \mbox{where} \quad  \frac{{n}\psi^{{n}}}{\psi^{n}+(1-\psi)^{n}} = \mathcal O(\psi-1)\\
&\lim_{\psi \to 1^-} \left(\lim_ {\omega \to 0^+} V[Y_n]\right) = \lim_{\psi \to 1^-} \left[\frac{{n}^2 \psi^{{n}}}{\psi^{n}+(1-\psi)^{n}} - \frac{{n}^2 \psi^{2{n}}}{[\psi^{n}+(1-\psi)^{n}]^2}\right]=0,  \quad \mbox{where} \quad \frac{{n}^2 \psi^{{n}}}{\psi^{n}+(1-\psi)^{n}} - \frac{{n}^2 \psi^{2{n}}}{[\psi^{n}+(1-\psi)^{n}]^2} = \mathcal O(\psi^{{n}}) 
\end{aligned}
\end{equation*}}

\bigskip
{\bf Case 2.1} - Negative association ($\omega>1$) with an even number of trials ($n=2k$): \\
\begin{equation*}
\begin{aligned}
\lim_{\omega \to +\infty} \tau_j
&= \lim_{\omega \to +\infty} \frac{\sum_{i=0}^{n-j} \binom{n-j}{i} \psi^i (1-\psi)^{n-j-i} \omega^{(n-j-i)(i+j)}}{\sum_{i=0}^{n} \binom{n}{i} \psi^i (1-\psi)^{n-i} \omega^{(n-i)i}} \\
&=\frac{\lim_{\omega \to +\infty}\sum_{i=0}^{n-j} \binom{n-j}{i} \psi^i (1-\psi)^{n-j-i} \omega^{(n-j-i)(i+j)}}{\lim_{\omega \to +\infty}\sum_{i=0}^{n} \binom{n}{i} \psi^i (1-\psi)^{n-i} \omega^{(n-i)i}} \\
&= \frac{1}{\psi^j}\frac{\binom{n-j}{\frac{n}{2}-j}}{\binom{n}{\frac{n}{2}}}\qquad j \le \frac{{n}}{2}, \qquad \tau_j=\mathcal O\left(\frac{1}{\omega}\right).\\
\end{aligned}
\end{equation*}

Therefore,
\begin{equation*}
\begin{aligned}
\lim_{\omega \to +\infty} E[Y_n] &= \lim_{\omega \to +\infty} n\psi\tau_1 = n\psi\lim_{\omega \to +\infty} \tau_1 = n\psi\frac{1}{2\psi} = \frac{n}{2} \\
\lim_{\omega \to +\infty} V[Y_n] &= \lim_{\omega \to +\infty} n\psi\eta = \lim_{\omega \to +\infty} n\psi [\tau_1-\psi(n\tau_1^2-(n-1)\tau_2)] = n\psi [\lim_{\omega \to +\infty} \tau_1-\psi(n\lim_{\omega \to +\infty}\tau_1^2-(n-1)\lim_{\omega \to +\infty}\tau_2)]  \\
&= n\psi \left[\frac{1}{2\psi}  - \psi n \left(\frac{1}{2\psi} \right)^2 +\psi(n-1)\frac{n-2}{4(n-1)\psi^2}\right] \\
&=n\psi \left[\frac{1}{2\psi}  - \psi n \frac{1}{4\psi^2}+\frac{n-2}{4\psi}\right]\\
&=n\psi \left[\frac{1}{2\psi}  - \frac{n}{4\psi}+\frac{n-2}{4\psi}\right]\\
&=n\psi \left[\frac{2-n+n-2}{4\psi}\right] = 0.
\end{aligned}
\end{equation*}

\bigskip
{\bf Case 2.2}: Negative association ($\omega>1$) with an odd number of trials ($n=2k+1$): \\
\begin{equation*}
\begin{aligned}
\lim_{\omega \to +\infty} \tau_j
&= \lim_{\omega \to +\infty} \frac{\sum_{i=0}^{n-j} \binom{n-j}{i} \psi^i (1-\psi)^{n-j-i} \omega^{(n-j-i)(i+j)}}{\sum_{i=0}^{n} \binom{n}{i} \psi^i (1-\psi)^{n-i} \omega^{(n-i)i}} \\
&=\frac{\lim_{\omega \to +\infty}\sum_{i=0}^{n-j} \binom{n-j}{i} \psi^i (1-\psi)^{n-j-i} \omega^{(n-j-i)(i+j)}}{\lim_{\omega \to +\infty}\sum_{i=0}^{n} \binom{n}{i} \psi^i (1-\psi)^{n-i} \omega^{(n-i)i}} \\
&= \frac{1}{\psi^j} \frac{R^{-1}+\psi}{\frac{\binom{n}{\frac{n-1}{2}}}{\binom{n-j}{\frac{n-1}{2}-j}}R^{-1}} , \qquad j \le \frac{{n}-1}{2}, \qquad \tau_j = \mathcal O\left(\frac{1}{\omega^2}\right).
\end{aligned}
\end{equation*}
where $R=\frac{\binom{n-j}{\frac{n-1}{2}-j+1}}{\binom{n-j}{\frac{n-1}{2}-j}}-1$.

Therefore,
\begin{equation*}
\begin{aligned}
\lim_{\omega \to +\infty} E[Y_n] &= \lim_{\omega \to +\infty} n\psi\tau_1 = n\psi\lim_{\omega \to +\infty} \tau_1 = n\psi\frac{\frac{n-1}{2}+\psi}{n \psi} = \frac{n-1}{2}+\psi\\
\lim_{\omega \to +\infty} V[Y_n] &= \lim_{\omega \to +\infty} n\psi\eta = \lim_{\omega \to +\infty} n\psi [\tau_1-\psi(n\tau_1^2-(n-1)\tau_2)]\\
&= n\psi [\lim_{\omega \to +\infty} \tau_1-\psi(n\lim_{\omega \to +\infty}\tau_1^2-(n-1)\lim_{\omega \to +\infty}\tau_2)]  \\
&= n\psi \left[\frac{\frac{n-1}{2}+\psi}{n\psi}-n\psi\left(\frac{\frac{n-1}{2}+\psi}{n\psi}\right)^2 + (n-1)\psi\left( \frac{\frac{n-3}{4}+\psi}{n\psi^2} \right)\right] \\
&= n\psi \left[\frac{\frac{n-1}{2}+\psi}{n\psi}-  \frac{(\frac{n-1}{2}+\psi)^2}{n\psi}+ \frac{(n-1)\left(\frac{n-3}{4}+\psi\right)}{n\psi}\right]\\
&= n\psi \left[ \frac{\frac{n-1}{2}+\psi-\frac{(n-1)^2}{4}-\psi^2-(n-1)\psi+\frac{(n-1)(n-3)}{4}+(n-1)\psi}{n\psi} \right]\\
&= n\psi \left[ \frac{2n-2+4\psi-n^2-1+2n-4\psi^2+n^2-3n-n+3}{4n\psi}\right]\\
&= n\psi \left[\frac{4\psi-4\psi^2}{4n\psi}\right] = \psi(1-\psi).
\end{aligned}
\end{equation*}

It follows that:
{
\begin{equation*}
\begin{aligned}
&\lim_{\psi \to 0^+} \left(\lim_ {\omega \to +\infty} E[Y_n]\right) = \lim_{\psi \to 0^+} \left[ \frac{{n}-1}{2}+\psi\right] = \frac{{n}-1}{2},  \quad \mbox{where} \quad \frac{{n}-1}{2}+\psi = \mathcal O(\psi) \\
&\lim_{\psi \to 0^+} \left(\lim_ {\omega \to +\infty} V[Y_n]\right) = \lim_{\psi \to 0^+} \left[ \psi(1-\psi)\right] = 0,  \quad \mbox{where} \quad \psi(1-\psi)=\mathcal O(\psi^2)\\
&\lim_{\psi \to 1^-} \left(\lim_ {\omega \to +\infty} E[Y_n]\right) = \lim_{\psi \to 1^-} \left[ \frac{{n}-1}{2}+\psi \right] = \frac{{n}-1}{2},  \quad \mbox{where} \quad  \frac{{n}-1}{2}+\psi =\mathcal O(\psi-1) \\
&\lim_{\psi \to 1^-} \left(\lim_ {\omega \to +\infty} V[Y_n]\right) = \lim_{\psi \to 1^-} \left[ \psi(1-\psi) \right] = 0,  \quad \mbox{where} \quad  \psi(1-\psi)=\mathcal O(\psi-1)
\end{aligned}
\end{equation*}
}

\vfill
Combining these results, it is straightforward to notice that, in all the cases where the limit of the variance is equal to 0, the random variable $Y_n$ degenerates to the limit of its expectation, $L_E$, with probability 1. Formally,
\[
P(Y_n=y) = \begin{cases} 1 & \mbox{if} \hspace{0.2cm} y = L_E\\ 0 & \text{otherwise} \end{cases} \hspace{0.5cm}\Longrightarrow \hspace{0.5cm} Y_n \xrightarrow[\begin{subarray} {}   \omega \rightarrow 0^+ \ \vee \ \omega \rightarrow +\infty \\ \psi \rightarrow 0^+ \ \vee \ \psi \rightarrow 1^-\end{subarray}]{d} \delta[L_E],
\]

where $\delta$ is the Dirac-Delta function $\delta_{x_0}[\phi] = \phi(x_0)$.
\end{proof}

\begin{proposition}
\label{prop1}
Let $Y_n \sim (L)MBD(\psi, \omega)$, $n$ be the number of trials,
\begin{equation*}
Z=\frac{Y_n-n\psi \tau_1}{\sqrt{n\psi\eta}} \xrightarrow[n \rightarrow +\infty]{d} \mathcal{N} (0, 1),
\end{equation*}
where $\mathcal{N}$ is the Gaussian distribution.
\end{proposition}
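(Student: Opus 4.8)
The plan is to establish a \emph{local} limit theorem for $Y_n$ and then sum it over intervals to obtain the stated weak convergence. The starting point is that, writing $\theta=\log\omega$ and splitting the quadratic exponent $(n-y)y=ny-y^{2}$, the (L)MBD is a quadratic exponential family,
\[
P(Y_n=y)=\frac{1}{K_n}\binom{n}{y}\exp\bigl(\alpha_n y-\theta y^{2}\bigr),\qquad \alpha_n=\log\tfrac{\psi}{1-\psi}+n\theta,
\]
with $K_n$ the normalising constant; hence $\log P(Y_n=y)$ is $\log\binom{n}{y}$ plus an \emph{exact} quadratic in $y$, minus $\log K_n$. Substituting Stirling's expansion $\log\binom{n}{y}=nH(y/n)-\tfrac12\log\bigl(2\pi n\,\tfrac{y}{n}(1-\tfrac{y}{n})\bigr)+O(1/n)$, with $H$ the binary entropy, turns $\log P(Y_n=y)$ into $n$ times a fixed smooth profile of $x=y/n$, plus the logarithmic Stirling correction, minus $\log K_n$.

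Next I would localise around the mode. Let $y^{\star}$ be the maximiser of the smooth profile; by Lovison's moment formula it agrees with $E[Y_n]=n\psi\tau_1$ up to $o\bigl(\sqrt{V[Y_n]}\bigr)$, so expanding about either is equivalent. Taylor-expanding at $y^{\star}$, the first-order term vanishes and the curvature of the profile reproduces the exact variance, i.e.\ equals $-1/V[Y_n]$ up to lower order, with $V[Y_n]=n\psi\eta$. The decisive point is that the quadratic part of the exponent contributes \emph{nothing} beyond second order, so every higher-order term comes from $\log\binom{n}{y}$, whose $k$-th derivative at $y^{\star}$ is $O(n^{1-k})$; the $k$-th order Taylor term ($k\ge3$) is therefore $O\bigl(n^{1-k}V[Y_n]^{k/2}\bigr)$, which tends to $0$ uniformly on windows $|y-y^{\star}|\le C\sqrt{V[Y_n]}$ provided $E[Y_n]/n$ stays bounded away from $0$ and $1$ and $V[Y_n]\to\infty$ with $V[Y_n]=o(n^{4/3})$ --- the natural non-degenerate regime, complementary to Theorem~\ref{theorem1}, where instead $V[Y_n]\to0$. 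The remaining constant is fixed by $\sum_y P(Y_n=y)=1$ and a Gaussian Riemann sum, giving, uniformly on such windows,
\[
P(Y_n=y)=\frac{1+o(1)}{\sqrt{2\pi\,V[Y_n]}}\exp\!\left(-\frac{(y-y^{\star})^{2}}{2\,V[Y_n]}\right).
\]

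To conclude I would trim the tails $|y-y^{\star}|>C\sqrt{V[Y_n]}$: either directly by Chebyshev from the exact variance $n\psi\eta$, or by noting that the smooth profile is strictly concave near $x^{\star}=y^{\star}/n$, so $\log P$ falls off at least quadratically and the tail mass is $o(1)$ for $C$ large. Summing the displayed local estimate over integers $y\in[\,y^{\star}+a\sqrt{V[Y_n]},\,y^{\star}+b\sqrt{V[Y_n]}\,]$ is a Riemann-sum approximation to the standard Gaussian integral over $[a,b]$, whence $P(a\le Z\le b)\to\Phi(b)-\Phi(a)$ for all $a<b$; this is exactly $Z\xrightarrow{d}\mathcal{N}(0,1)$.

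The step I expect to be the real obstacle is the uniform control of the Taylor remainder, because it is entangled with the large-$n$ behaviour of $\tau_1$ and $\tau_2$ (ratios of the partition functions $K_m(\psi,\omega)$), hence of $\eta$ and $V[Y_n]$: showing $V[Y_n]\to\infty$ while $V[Y_n]=o(n^{4/3})$ requires the same kind of asymptotic analysis of $K_m(\psi,\omega)$ already carried out in the proof of Theorem~\ref{theorem1}, and it is precisely the condition separating the Gaussian regime from the degenerate one. A structurally cleaner but heavier alternative is to recognise $(Z_1,\dots,Z_n)$ as a mean-field (Curie--Weiss) binary field whose pairwise interaction is proportional to $\log\omega$, with $Y_n$ its total count, and to invoke the Ellis--Newman fluctuation theorem away from criticality; I would nevertheless keep the self-contained Laplace argument above as the primary route.
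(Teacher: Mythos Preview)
Your route is entirely different from the paper's. The paper carries out no Laplace or local-limit analysis: it records the exchangeability identities $E[Y_n]=nE[Z_1]$ and $V[Y_n]=nV[Z_1]+n(n-1)\mathrm{Cov}[Z_1,Z_2]$, quotes a CLT for dependent sequences due to Kaminski, and then simply asserts that Kaminski's weak-dependence inequality is met for every $\omega$ once $n$ is large, without checking it.

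Your programme is more explicit, but the step you flag as ``the real obstacle'' is fatal rather than technical. The phrase ``$n$ times a fixed smooth profile of $x=y/n$'' is already the warning sign: with your own $\alpha_n=\log\tfrac{\psi}{1-\psi}+n\theta$,
\[
\alpha_n y-\theta y^{2}=n\log\tfrac{\psi}{1-\psi}\,x+n^{2}\theta\,x(1-x),
\]
so for any fixed $\omega\neq1$ the dominant term is of order $n^{2}$, not $n$. The curvature of $\log P$ at an interior mode is $-n/\bigl(y^{\star}(n-y^{\star})\bigr)-2\theta\to-2\theta$: for $\omega>1$ the Laplace width, and $V[Y_n]=n\psi\eta$ itself, stay bounded, so $Z$ sits on a lattice of spacing $1/\sqrt{V[Y_n]}\asymp1$ and your Riemann-sum passage to $\Phi(b)-\Phi(a)$ cannot produce a continuous limit; for $\omega<1$ the interior stationary point becomes a local minimum once $n>2/|\theta|$ and the mass is pushed to $\{0,n\}$, yielding a degenerate or two-point limit rather than a Gaussian. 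The Ellis--Newman alternative does not rescue this either, since their fluctuation theory is calibrated to pair couplings of order $1/n$, whereas here the pair interaction $\log\omega$ is held fixed, which is precisely the strong-coupling regime outside their framework. In short, the obstacle you anticipate is where the argument actually breaks, and the paper's appeal to Kaminski sidesteps rather than resolves the same difficulty.
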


\begin{proof}
Because of the symmetry of the joint distribution of $(Z_1, ..., Z_n)$, it is always possible to write:
\[E[Y_n] = n E[Z_1]\] and
\[V[Y_n] = nV[Z_1] +  n(n-1) Cov[Z_1,Z_2].\]
Therefore, the Central Limit Theorem for dependent random variables can be applied to $Y_n$, provided that the overall mean and variance behave `sensibly'. \\
Specifically, we refer to the central limit theorem for dependent classes of random variables derived by Kaminski in \citep{kaminski2007central}:

\begin{thm*}
Let $\left\{X_i\right\}_{i \ge 1}$ be a sequence of identically distributed random variables such that $E|X_1|^{2+\epsilon}<+\infty$ for some $\epsilon >0$. Let $V[X_1]=\sigma^2$ and $\epsilon_1$ be a positive number such that $\epsilon_1<\frac{\epsilon}{2(1+\epsilon)}$. Denote by $S=\sum_{i=1}^n X_i$ the partial sum. Suppose that for sufficiently large $k$, the inequality
\begin{equation}
\label{cond1}
\sup\left\{|P\left(\bigcap_{i=1}^{j} \{X_{v_i} \le x_{v_i}\}\right)  - \prod_{i=1}^{j}P (X_{v_i} \le x_{v_i}): (x_{v_1},\dots,x_{v_j}) \in \mathbb{R}^j  |  \right\} \le (1-k^{-\epsilon_1})^{k-k^{\epsilon_1}-j}
\end{equation}
holds, where ${v_1},\dots,v_j$ is any choice of indices such that $k^{\epsilon_1}<v_1<\dots<v_j\le k$. Then: 
\[
\frac{S-E[S]}{\sigma\sqrt{n}} \xrightarrow[]{d} N(0,1) \qquad \hbox{as } n\to \infty.
\]
\end{thm*} 
It is important to underline that the left-hand side of condition~\ref{cond1} is only on the tail $X_{k^{\epsilon_1}},X_{k^{\epsilon_1}+1},\dots,X_{k}$ and it reflects the degree of dependence among $X_{v_1},\dots,X_{v_j}$ (i.e. if $X_{v_1},\dots,X_{v_j}$ are independent, the left-hand side of inequality~\ref{cond1} is 0, otherwise it is a real positive number).
Then, it is easy to see that, for fixed $k$, the right-hand side of~\ref{cond1} tends to become larger as $j$ increases. \\

In our case, a different number ${n}$ of Bernoulli variables $Z_1, ..., Z_n$ is required so as to satisfy inequality~\ref{cond1}, depending on their average degree of dependence, ${\omega}$. Condition~\ref{cond1} surely holds for any ${\omega}$ if ${n}\to \infty$. 
\end{proof}

%

\bigskip

\begin{thm}
\label{theorem2}
Let $Y_n \sim (L)MBD(\psi, \omega)$, $n$ be the number of trials and $\psi=\pi / \tau_1 \ge \frac{1}{2}$:
\begin{equation*}
\omega>1 \Rightarrow \psi > \pi
\end{equation*}
\end{thm}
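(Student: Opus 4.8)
\smallskip
Since $\pi=\psi\tau_1$ with $\psi>0$, the assertion $\psi>\pi$ is equivalent to $\tau_1<1$, i.e. (because $K_n(\psi,\omega)>0$) to the inequality $K_{n-1}(\psi,\omega)<K_n(\psi,\omega)$ between the two normalising constants; so the whole problem is the comparison of these two sums. I would treat the genuinely non-degenerate range $n\ge 2$, $\tfrac{1}{2}<\psi<1$, separately: for $n=1$, or $\psi=\tfrac{1}{2}$, or $\psi=1$ one has $K_{n-1}=K_n$ (in the middle case by the $y\leftrightarrow n-y$ symmetry of the pmf), hence $\tau_1=1$ and $\pi=\psi$, so only the boundary equality $\pi=\psi$ holds there.

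\smallskip
The first step is a one-step recursion. Splitting the exponents as $(n-i)i=i(n-1-i)+i$ and $(n-1-i)(i+1)=i(n-1-i)+(n-1-i)$ --- equivalently, summing the joint mass function of $(Z_1,\dots,Z_n)$ over the last coordinate --- yields
\[
K_n(\psi,\omega)=\psi\,K_{n-1}(\psi,\omega)+(1-\psi)\,G_{n-1}(\psi\omega,1-\psi),
\qquad
K_{n-1}(\psi,\omega)=G_{n-1}\!\big(\psi,(1-\psi)\omega\big),
\]
where $G_m(a,b):=\sum_{i=0}^{m}\binom{m}{i}a^i b^{m-i}\omega^{i(m-i)}$. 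Subtracting,
\[
K_n-K_{n-1}=(1-\psi)\Big[\,G_{n-1}(\psi\omega,1-\psi)-G_{n-1}\!\big(\psi,(1-\psi)\omega\big)\Big],
\]
and since $1-\psi>0$ it remains only to prove that the bracket is positive.

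\smallskip
Now $G_m(a,b)$ is symmetric in $a,b$ and, with the product $ab$ held fixed, depends only on the ratio: $G_m(a,b)=(ab)^{m/2}H_m(a/b)$ with $H_m(t):=\sum_{i=0}^{m}\binom{m}{i}t^{\,i-m/2}\omega^{i(m-i)}$, and $H_m(t)=H_m(1/t)$, whence $H_m(t)=\sum_{i=0}^{m}\binom{m}{i}\cosh\!\big((i-\tfrac{m}{2})\ln t\big)\,\omega^{i(m-i)}$ is strictly increasing on $[1,\infty)$ for $m=n-1\ge 1$. The crucial observation is that the two pairs produced by the recursion have the \emph{same} product, $(\psi\omega)(1-\psi)=\psi\big((1-\psi)\omega\big)=\psi(1-\psi)\omega$, so the comparison collapses to $H_{n-1}(t_1)$ versus $H_{n-1}(t_2)$ with $t_1=\dfrac{\psi\omega}{1-\psi}$, $t_2=\dfrac{\psi}{(1-\psi)\omega}$. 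From $\psi>\tfrac{1}{2}$ and $\omega>1$ one checks $t_1>1$, $t_1t_2=\psi^2/(1-\psi)^2>1$, and $t_1/t_2=\omega^2>1$, hence $t_1>\max\{t_2,1/t_2\}\ge 1$; monotonicity of $H_{n-1}$ then gives $H_{n-1}(t_1)>H_{n-1}(t_2)$, the bracket is positive, $K_n>K_{n-1}$, $\tau_1<1$, and $\pi=\psi\tau_1<\psi$.

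\smallskip
The routine parts are the recursion itself and the final algebraic checks; the points that need care are (i) recognising $K_{n-1}$ in the paper's shifted form as exactly $G_{n-1}(\psi,(1-\psi)\omega)$ rather than as the plain $(n{-}1)$-trial normalising constant, and (ii) the genuine idea of the argument, namely that the two $(a,b)$ pairs produced by the recursion share the same product --- this is what reduces the whole claim to the one-variable fact that $H_{n-1}$ has a strict minimum at $t=1$. The hypothesis $\psi\ge\tfrac{1}{2}$ enters precisely through $t_1>1$ and $t_1t_2>1$, and it cannot be dropped: for $\psi<\tfrac{1}{2}$ the same computation yields the reversed inequalities, so $\pi>\psi$.
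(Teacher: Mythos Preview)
Your argument is correct and is genuinely different from --- and in fact stronger than --- the paper's own proof. The paper proceeds by writing $D_n=K_{n-1}-K_n$ and asserting a factorisation
\[
D_n=\Delta\,(\psi-1)(2\psi-1)(\omega-1)\quad(\text{$n$ even}),\qquad
D_n=\Delta\,(\psi-1)(2\psi-1)(\omega-1)(\omega+1)\quad(\text{$n$ odd}),
\]
with $\Delta$ a polynomial that is claimed to be positive. However, the paper explicitly states that ``only numeric proofs are possible'' for $\Delta>0$ and supports the claim with tables and plots. So the paper's proof is, at the key step, a numerical verification rather than an analytic argument.

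Your approach avoids this entirely. The one-step recursion $K_n=\psi K_{n-1}+(1-\psi)G_{n-1}(\psi\omega,1-\psi)$ together with the identification $K_{n-1}=G_{n-1}(\psi,(1-\psi)\omega)$ (correctly noting the shifted exponent in the paper's definition of $K_{n-a}$) reduces the comparison to two evaluations of $G_{n-1}$ with the \emph{same} product $ab=\psi(1-\psi)\omega$; the substitution $G_m(a,b)=(ab)^{m/2}H_m(a/b)$ with $H_m(t)=H_m(1/t)$ strictly increasing on $[1,\infty)$ for $m\ge 1$ then finishes the job via the elementary inequalities $t_1>1$, $t_1t_2>1$, $t_1/t_2>1$. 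This yields a fully analytic proof of the strict inequality $\tau_1<1$ for $\tfrac12<\psi<1$, $\omega>1$, $n\ge 2$, as well as the boundary identities at $\psi\in\{\tfrac12,1\}$ and $n=1$. In effect, your equal-product observation is exactly the structural reason behind the paper's claimed factorisation through $(2\psi-1)(\omega-1)$, and your monotonicity of $H_{n-1}$ supplies the missing positivity of the residual factor $\Delta$ that the paper could establish only numerically.
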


\begin{proof}
\begin{equation*}
\pi(\psi,\omega)=\psi \tau_1
\end{equation*}
Now,
\begin{equation}
\label{tau}
\begin{aligned}
\tau_1 &= \frac{\sum_{i=0}^{n-1} \binom{n-1}{i} \psi^i (1-\psi)^{n-1-i} \omega^{(n-1-i)(i+1)}}{\sum_{i=0}^{m} \binom{m}{i} \psi^i (1-\psi)^{m-i} \omega^{(n-i)i}} \le 1  \iff \\
&D_n = \sum_{i=0}^{n-1} \binom{n-1}{i} \psi^i (1-\psi)^{n-1-i} \omega^{(n-1-i)(i+1)} - \sum_{i=0}^{n} \binom{n}{i} \psi^i (1-\psi)^{n-i} \omega^{(n-i)i} \le 0
\end{aligned}
\end{equation}
The difference $D_n$ can be factored as:
\begin{equation}
\label{factor}
D_{{n}} = \begin{cases} \Delta(\psi-1)(2\psi-1)(\omega-1) & \mbox{if } {n} \mbox{ is even} \\  \Delta(\psi-1)(2\psi-1)(\omega-1)(\omega+1) & \mbox{if } {n} \mbox{ is odd} \end{cases}
\end{equation}
where $\Delta$ is a positive polynomial (only numeric proofs are possible and they are given in Tables \ref{tableomegapsi4}-\ref{tableomegapsi12} and in Figure \ref{delta}) and $\mathcal{D}$ denotes the set of positive odd integers. \\
By expressions \ref{tau}-\ref{factor}, it follows that:
\begin{equation*}
\tau_1 \le 1 \iff D_n   \le 0 \iff \begin{cases} 0 \le \psi \le \frac{1}{2}  \wedge 0 \le \omega \le 1 \\ \frac{1}{2} \le \psi < 1  \wedge \omega \ge 1. \end{cases}
\end{equation*}
This result is also shown in Figure \ref{tau1}, where red and black points correspond respectively to $\tau_1 \le 1$ and $\tau_1 >1$, for different sample size $n$.
\end{proof}
\begin{figure}[!htbp]
\centering
\includegraphics[width=\textwidth]{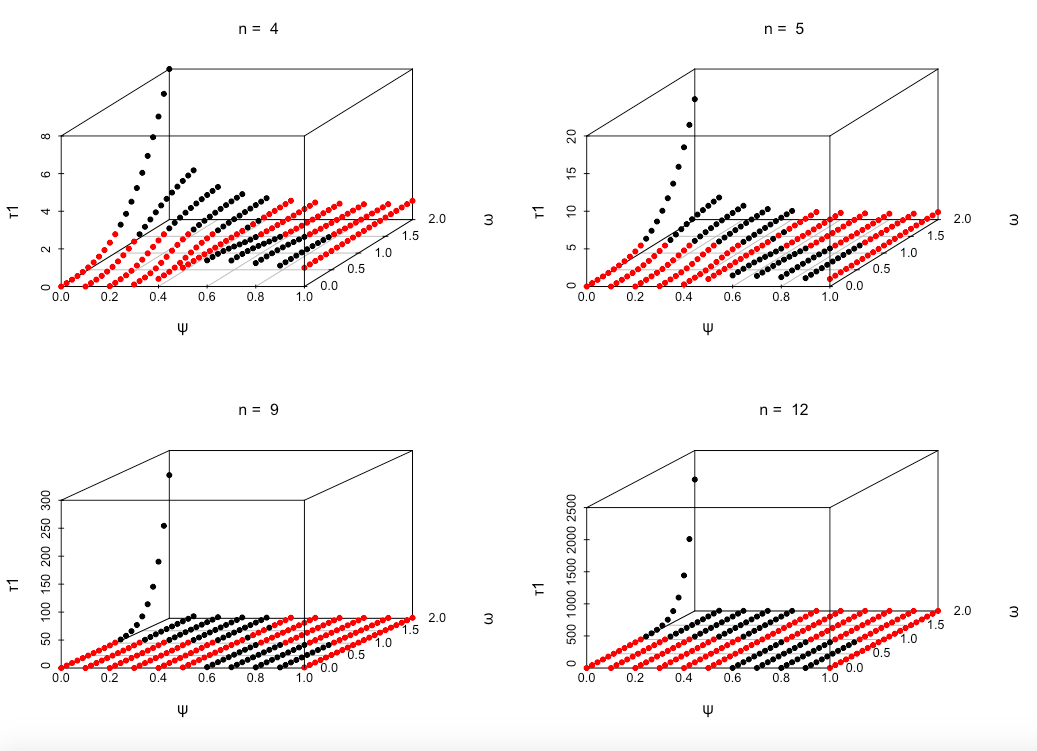}
\caption{Values of $\tau_1$ for $\psi \in [0,1]$, $\omega \in [0,2]$. Red and black points correspond respectively to $\tau_1 \le 1$ and $\tau_1 >1$.}
\label{tau1}
\end{figure}

\begin{figure}[!htbp]
\centering
\includegraphics[width=\textwidth]{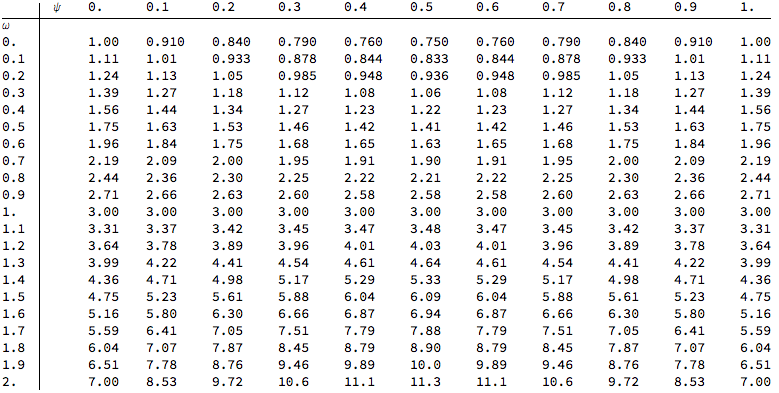}
\captionof{table}[foo]{Values of $\Delta$ for $\psi \in [0,1]$, $\omega \in [0,2]$ and $n=4$.}
\label{tableomegapsi4}
\end{figure}

\begin{figure}[!htbp]
\centering
\includegraphics[width=\textwidth]{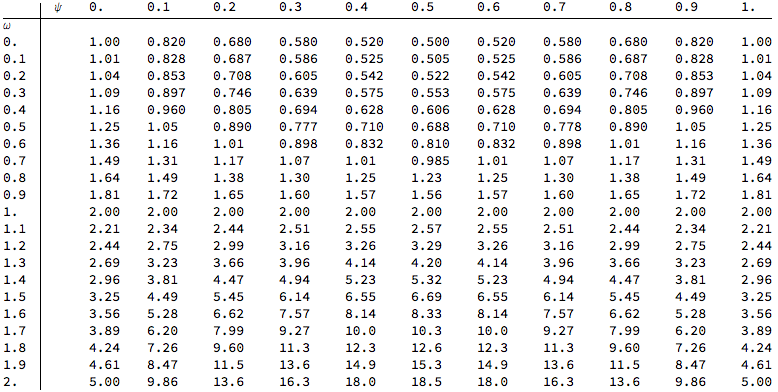}
\captionof{table}[foo]{Values of $\Delta$ for $\psi \in [0,1]$, $\omega \in [0,2]$ and $n=5$.}
\label{tableomegapsi5}
\end{figure}

\begin{figure}[!htbp]
\centering
\includegraphics[width=\textwidth]{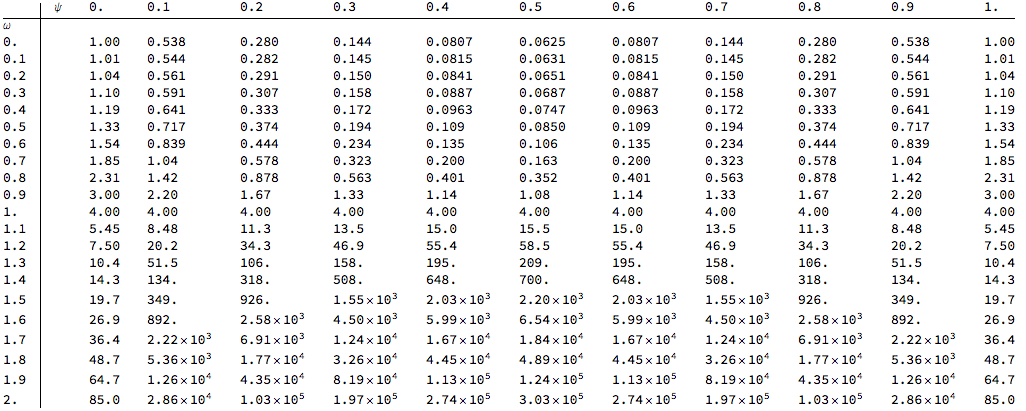}
\captionof{table}[foo]{Values of $\Delta$ for $\psi \in [0,1]$, $\omega \in [0,2]$ and $n=9$.}
\label{tableomegapsi9}
\end{figure}

\begin{figure}[!htbp]
\centering
\includegraphics[width=\textwidth]{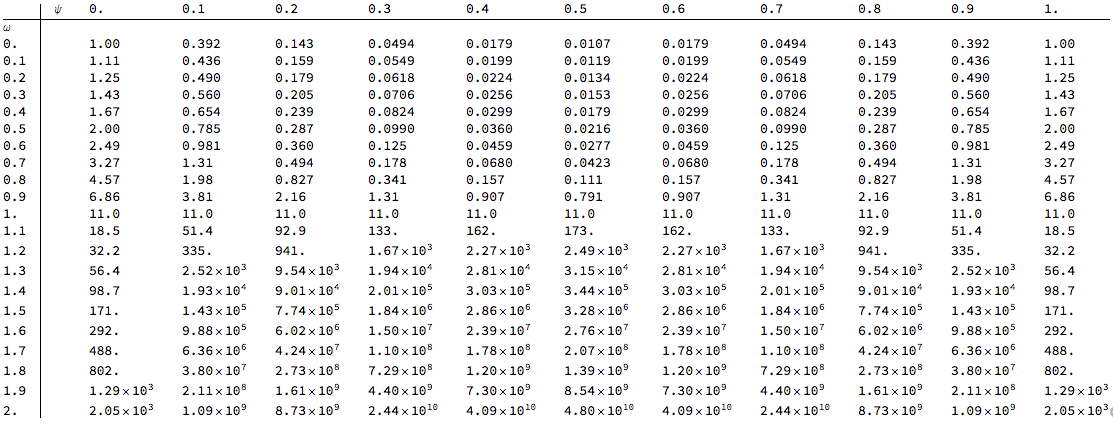}
\captionof{table}[foo]{Values of $\Delta$ for $\psi \in [0,1]$, $\omega \in [0,2]$ and $n=12$.}
\label{tableomegapsi12}
\end{figure}

\begin{figure}[!htbp]
\centering
\includegraphics[width=0.8\textwidth,height=0.8\textheight,keepaspectratio]{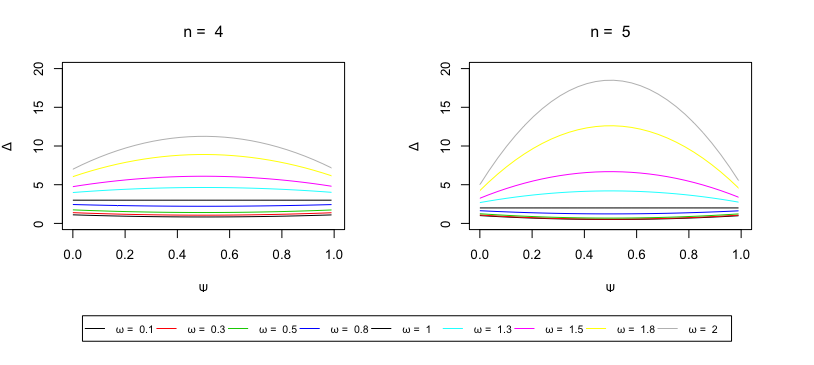}
\includegraphics[width=0.8\textwidth,height=0.8\textheight,keepaspectratio]{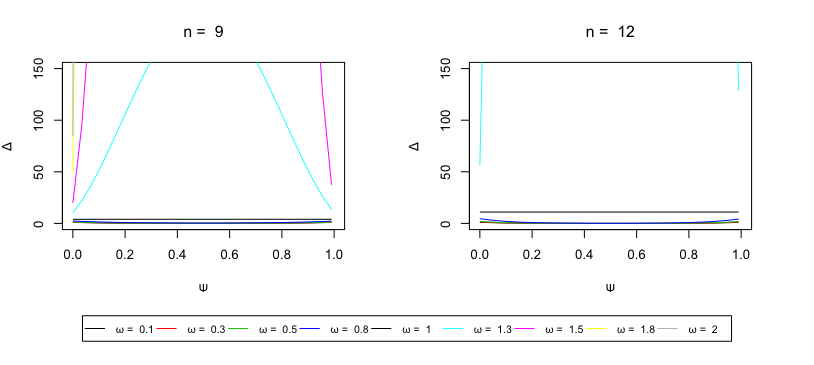}
\caption{Values of $\Delta$ for $\psi \in [0,1]$ according to different values of $\omega$ and $n$.}
\label{delta}
\end{figure}

\section{Concluding remarks}
Our interest in the (L)MBD has been motivated by ensemble classification accuracy assessment: in particular, our idea is to use the (L)MBD in order to describe the classification accuracy of an ensemble of $n$ classifiers. Namely, following such distribution, we can model the prediction accuracy of the majority vote ensemble in the presence of dependent classifiers (with the same individual probability of success $\pi$) as:
\begin{equation*}
\label{eq:A.multbinom}
\widehat{Ac}=1-F_s(q)=\sum_{y=q+1}^{n} \frac{\binom{n}{y} \psi^y (1-\psi)^{n-y} \omega^{(n-y)y}}{\sum_{i=0}^{n} \binom{n}{i} \psi^i (1-\psi)^{n-i} \omega^{(n-i)i}},
\end{equation*}
where $q = \begin{cases} \frac{n}{2} & \mbox{if $n$ is even} \\  \frac{n-1}{2} & \mbox{if $n$ is odd}. \end{cases}$.\\ \\
Specifically, in the RP ensemble classifier context \citep{samworth}, we proved that (L)MBD is able to almost perfectly seize the actual intra-classifiers association using the $\omega$ parameter and, thus, to better characterize and predict (with respect to both the Binomial and Beta-Binomial models) the ensemble accuracy. \\
Moreover, as a consequence of Theorem~\ref{theorem2}, we have proved that the marginal probability of success $\psi$ of a set of $n$ classifiers is larger than the common individual one, $\pi$, only if the $n$ classifiers are negatively related ($\omega \ge 1$) to each other.

\section*{Acknowledgements}
I would like to thank Professor Patricia Altham for her valuable comments and suggestions on a preliminary draft of this paper. All errors remain mine.

\break

\bibliographystyle{chicago}
\bibliography{bibliografiaLMBD}
\end{document}